\theoremstyle{plain}
\newtheorem{theorem}{Theorem}[section]
\newtheorem*{hypothesis*}{Hypothesis}
\newtheorem{corollary}[theorem]{Corollary}
\newtheorem{lemma}[theorem]{Lemma}
\numberwithin{equation}{section}
\theoremstyle{remark}
\newtheorem{remark}[theorem]{Remark}
\title[Remarks on Landau--Siegel zeros]{Remarks on Landau--Siegel zeros}
\author{Debmalya Basak}
\address{
Debmalya Basak: Department of Mathematics,
University of Illinois, 
Altgeld Hall, 1409 W. Green Street,
Urbana, IL, 61801, USA}
\email{dbasak2@illinois.edu}
\author{Jesse Thorner}
\address{
Jesse Thorner: Department of Mathematics,
University of Illinois, 
Altgeld Hall, 1409 W. Green Street,
Urbana, IL, 61801, USA}
\email{jat9@illinois.edu}
\author{Alexandru Zaharescu}
\address{
Alexandru Zaharescu: Department of Mathematics,
University of Illinois, 
Altgeld Hall, 1409 W. Green Street,
Urbana, IL, 61801, USA and Simion Stoilow Institute of Mathematics of the Romanian Academy, 
P. O. Box 1-764, RO-014700 Bucharest, Romania}
\email{zaharesc@illinois.edu}  
\begin{document}

\dedicatory{Dedicated to Dorian Goldfeld on the 50th anniversary of his elegant proof of Siegel's theorem}

\begin{abstract}
For certain families of $L$-functions, we prove that if each $L$-function in the family has only real zeros in a fixed yet arbitrarily small neighborhood of $s=1$, then one may considerably improve upon the known results on Landau--Siegel zeros.  Sarnak and the third author proved a similar result under much more restrictive hypotheses.
\end{abstract}

\maketitle

\section{Introduction}
Building on the seminal work of Hadamard and de la Vallée Poussin for the Riemann zeta function $\zeta(s)$, it is known that if $\chi\pmod{q_\chi}$ is a primitive  Dirichlet character, then there exists an absolute and effectively computable constant $c_1>0$ such that the Dirichlet $L$-function $L(s, \chi)$ has at most one zero $\beta$ (necessarily real and simple) in the region
\[
\mathrm{Re}(s) \geq 1-c_1 / \log (q_{\chi}(|\mathrm{Im}(s)|+3)).
\]
If $\beta$ exists, then $\chi$ is real and nontrivial.  It follows from Siegel's lower bound on $L(1, \chi)$ \cite{S1935} that for any $\varepsilon>0$, there exists a constant $c_2(\varepsilon)>0$ such that if $\beta$ exists, then
\[
\beta \leq 1-c_2(\varepsilon) q_{\chi}^{-\varepsilon}.
\]
See Goldfeld \cite{G1974} for a concise proof of Siegel's lower bound on $L(1,\chi)$.  Unfortunately, no known proof provides an effective determination of $c_2$ in terms of $\varepsilon$. Define
\[
\mathcal{S}=\{\chi\pmod{q_{\chi}}\colon \textup{$\chi$ primitive and real}\}.
\]
Tatuzawa \cite{T1951} refined Siegel's result as follows:  If $ 0<\varepsilon<1/11.2$, then
\[
\#\{\textup{$\chi\in \mathcal{S}$: $q_{\chi}\geq e^{1/\varepsilon}$ and $L(s,\chi)$ has a real zero in $[1-0.655\varepsilon q_{\chi}^{-\varepsilon},1)$}\}\leq 1.
\]

Let Hypothesis H denote the hypothesis that if $\chi\in\mathcal{S}$, then all zeros of $L(s,\chi)$ lie on $\mathrm{Re}(s)=1/2$ or $\mathrm{Im}(s)=0$.  In other words, the generalized Riemann hypothesis is assumed to hold only for the non-real zeros, so Landau--Siegel zeros are permitted to exist.  Under Hypothesis H, the work of Sarnak and Zaharescu \cite[Proof of Theorem 1]{SZ2002} implies the following improvement of Tatuzawa's theorem:  For all $\varepsilon>0$, there exists an effectively computable constant $c_3(\varepsilon)>0$ such that
\begin{equation}
\label{eqn:SarnakZaharescu}
\#\{\textup{$\chi\in \mathcal{S}$: $q_{\chi}\geq c_3(\varepsilon)$ and $L(s,\chi)$ has a real zero in $[1-(\log q_{\chi})^{-\varepsilon},1)$}\}\leq 1.	
\end{equation}
This ``exponentiates'' the quality of the zero free region at the cost of a strong hypothesis for the non-real zeros of Dirichlet $L$-functions.

Here, we will use Tur{\'a}n's power sum method to prove that \eqref{eqn:SarnakZaharescu} holds under a much weaker hypothesis.  In order to state our hypothesis, we fix $0 < \delta < 1/10$.

\begin{hypothesis*}[$H_{\delta}$]
\textup{If $\chi\in \mathcal{S}$, then all the zeros of $L(s,\chi)$ in the disk $|z-1| < \delta$ are real.}
\end{hypothesis*}

\begin{theorem}
\label{thm:main}
Fix $0 < \delta < 1/10$.  If $H_{\delta}$ is true, then for all $\varepsilon>0$, there exists an effectively computable constant $q_0=q_0(\delta, \varepsilon)>0$ such that
\[
\#\{\textup{$\chi\in \mathcal{S}$: $q_{\chi}\geq q_0$ and $L(s,\chi)$ has a real zero in $[1-(\log q_{\chi})^{-\varepsilon},1)$}\}\leq 1.
\]
\end{theorem}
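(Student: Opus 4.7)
The plan is to argue by contradiction. Suppose there exist distinct characters $\chi_1,\chi_2 \in \mathcal{S}$ with $q_i := q_{\chi_i} \geq q_0$ (to be chosen) and real zeros $\beta_i \in [1 - (\log q_i)^{-\varepsilon}, 1)$; write $\eta_i = 1 - \beta_i$. Let $\chi_3$ be the real primitive character induced by $\chi_1\chi_2$ and consider
\[
f(s) = \zeta(s) L(s,\chi_1) L(s,\chi_2) L(s,\chi_3),
\]
which up to bounded local corrections is the Dedekind zeta function of the biquadratic field attached to $\chi_1,\chi_2$. The point is that $f$ has a simple pole at $s = 1$, while $-f'/f(s) = \sum_n \Lambda_K(n) n^{-s}$ has non-negative Dirichlet coefficients. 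For $q_0$ large we have $\eta_i < \delta$, so $\beta_1,\beta_2$ both lie in the disk $|s-1| < \delta$; by $H_\delta$ applied to each $L(s,\chi_i)$, every zero of $f$ in this disk is in fact real.

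The next ingredient is a weighted explicit formula. For an integer $k \geq 1$ and a scale $X > 1$, Perron inversion against the Mellin pair $(\log(X/n))^k/k! \leftrightarrow X^s/s^{k+1}$ yields
\[
0 \leq \sum_{n \leq X} \Lambda_K(n) \frac{(\log(X/n))^k}{k!} = X - \sum_{\rho} \frac{X^\rho}{\rho^{k+1}} + O_k(\log Q),
\]
where $Q = q_1 q_2$ and $\rho$ runs over the nontrivial zeros of $f$. I would choose $X$ and $k$ so that the putative Landau--Siegel zeros force the right-hand side to be negative. With $X = \exp(c_0 (\log Q)^{\varepsilon})$ for a small absolute constant $c_0 > 0$, the individual contributions $X^{\beta_i}/\beta_i^{k+1}$ are each $(1+o(1))X$, so $\beta_1,\beta_2$ together contribute $(2+o(1))X$ to $\sum_\rho X^\rho/\rho^{k+1}$, strictly exceeding the main term $X$.

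The key difficulty, and the place where Tur\'an's power sum method enters, is controlling the contribution from complex zeros. Under $H_\delta$ such zeros satisfy $|\rho - 1| \geq \delta$, but their real parts can still be arbitrarily close to $1$ when $|\mathrm{Im}(\rho)|$ just exceeds $\delta$; the naive triangle inequality is therefore inadequate, because there can be of order $\log Q$ zeros of $f$ with $|\mathrm{Im}(\rho)| \in [\delta, 1]$ and $\mathrm{Re}(\rho) \approx 1$. Writing $X = e^{\nu}$ and viewing $\sum_{\text{complex }\rho} X^\rho/\rho^{k+1}$ as a power sum $\sum_\rho c_\rho (e^\rho)^\nu$ in the variable $\nu$, one applies Tur\'an's second main theorem on a dyadic range $\nu \in [N, 2N]$ to locate an $X$ in the target window for which the oscillation among the complex zeros does not conspire to cancel the Landau--Siegel contribution. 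The parameter $k$ is tuned (as a function of $\delta$ and $\log\log Q$) so that the factor $1/\rho^{k+1}$ separates the complex zeros from the real ones by a quantitative gap $(1+c(\delta))^{-k}$, while keeping $\beta_i^{-(k+1)}$ close to $1$.

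For the $X$ produced by Tur\'an's method, the weighted explicit formula gives $0 \leq X - (2+o(1))X + o(X) + O_k(\log Q) < 0$, a contradiction. Effectivity of $q_0 = q_0(\delta,\varepsilon)$ follows because every step is effective. The main obstacle is the Tur\'an step: selecting $(X,k)$ and carrying out the power sum analysis so that all error terms combine favorably, which requires delicate bookkeeping of the zeros permitted by $H_\delta$. A subsidiary point is the possibility that $\chi_3$ admits its own Landau--Siegel zero $\beta_3$; by Tatuzawa's theorem (with a suitably chosen parameter) any such $\beta_3$ lies below $1 - c_1 q_{\chi_3}^{-c_1}$, so its contribution is $O(X^{1-c_2(\delta,\varepsilon)})$ and is absorbed into the error.
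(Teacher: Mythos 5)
Your global structure matches the paper's: argue by contradiction with two characters, form $D(s)=\zeta(s)L(s,\chi_1)L(s,\chi_2)L(s,\psi)$ with nonnegative Dirichlet coefficients, and use $H_\delta$ plus a power-sum argument to control the non-real zeros. But the Tur\'an step, which you correctly identify as the crux, does not work as you describe it. You propose applying Tur\'an's \emph{second} main theorem to $\sum_\rho \rho^{-(k+1)}(e^{\rho})^{\nu}$ in the variable $\nu=\log X$. Three obstructions: (i) what the argument needs is a \emph{one-sided} lower bound on the real part of the zero sum (so that the complex zeros cannot push the explicit formula back to nonnegativity), not a lower bound on the modulus of the maximum, and one-sided power-sum theorems require unit (or at least argument-restricted) coefficients, which $\rho^{-(k+1)}$ are not; (ii) the second main theorem's lower bound degrades like $(n/(8e(m+n)))^{n}$ in the number of terms $n$, and here $n\asymp\log Q$ zeros are in play, so the bound is vacuous; (iii) your claimed separation "$1/\rho^{k+1}$ separates complex from real zeros by $(1+c(\delta))^{-k}$'' is false, because the kernel $X^{s}/s^{k+1}$ is centered at $s=0$ and \emph{amplifies} any zero with $|\rho|<1$ (e.g.\ $\rho=\tfrac12+i\gamma$ with $|\gamma|$ small, which $H_\delta$ permits in abundance): such terms grow like $2^{k}$, and with $k\asymp\log\log q_1$ and $\asymp\log q_1$ such zeros their total can exceed $X=\exp(c_0(\log q_2)^{\varepsilon})$ when $q_1$ is much larger than $q_2$ --- a regime you must handle, since only $q_2\le q_1$ is known. (Your Tatuzawa detour for $\psi$ is unnecessary: real zeros of $\psi$ enter the zero sum with a favorable sign and can be discarded by positivity.)

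The paper's resolution of exactly these issues is the content you are missing. It centers the kernel to the \emph{right} of all zeros, at $s=1+\eta$ with $\eta=\delta/e+\beta_2-1$, by taking the $(k\ell-1)$-th derivative of $-D'/D$; then every zero $\omega$ satisfies $|1+\eta-\omega|\ge 1+\eta-\beta_2$, so nothing is amplified. Setting $z_1=(1+\eta-\beta_2)^{-\ell}$ and $z_j=(1+\eta-\omega_j)^{-\ell}$ for the non-real zeros, the sum over zeros becomes a \emph{pure} power sum $\sum_j z_j^{k}$ with unit coefficients and $|z_1|$ maximal; the choice $\ell=\lceil\log\log q_1\rceil$ together with $H_\delta$ (which forces $|1+\eta-\omega|\ge\eta+\delta/2$ for non-real $\omega$) and a zero-counting estimate gives $K=\sum_j|z_j|/|z_1|\le 5$, i.e.\ the power sum has bounded effective length. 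Then the one-sided form of Tur\'an's \emph{first} main theorem produces some $k\le 24K\le 120$ with $\mathrm{Re}\sum_j z_j^{k}\ge\tfrac18|z_1|^{k}$, with no loss in the number of zeros. Playing this against the upper bound $\eta^{-k\ell}-(1+\eta-\beta_1)^{-k\ell}$ (small via Bernoulli's inequality and $1-\beta_1\le(\log q_1)^{-\varepsilon}$) yields $1-\beta_2\ge\tfrac{\delta}{e}(1-e^{-\varepsilon/480})$, an absolute constant, contradicting $1-\beta_2\le(\log q_2)^{-\varepsilon}$; note this endgame never needs the $\beta_2$ term to be comparable to the pole term, which is how the paper sidesteps the $q_1\ggg q_2$ asymmetry that breaks your "$(2+o(1))X$ versus $X$'' comparison.
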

\begin{remark}
Our proof provides an explicit permissible expression for $q_0$.  See \eqref{eqn:q0_def}.
\end{remark}
\begin{remark}
It follows from Heath-Brown's zero density estimate in \cite[Theorem 3]{HB} that if the $\chi\in\mathcal{S}$ are ordered by conductor $q_{\chi}$, then a density 1 subset of $\chi\in\mathcal{S}$ satisfy $H_{\delta}$.  In contrast, Hypothesis H in \cite{SZ2002} has not been verified for any nontrivial $\chi\in\mathcal{S}$ yet.
\end{remark}

Our next result, which is ineffective, is an immediate corollary of Theorem \ref{thm:main}.  

\begin{corollary}
Fix $0 < \delta < 1/10$, and assume that $H_{\delta}$ is true. For all $\varepsilon>0$, there exists an ineffective constant $c(\delta, \varepsilon)>0$ such that if $\chi\in \mathcal{S}$ and $\sigma\geq 1-c(\delta,\varepsilon)(\log q_{\chi})^{-\varepsilon}$, then $L(\sigma,\chi)\neq 0$.
\end{corollary}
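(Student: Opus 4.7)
The plan is to deduce the corollary from Theorem~\ref{thm:main} by combining a finiteness argument with one ineffective step for the possibly exceptional character. Because the conclusion concerns $L(\sigma,\chi)$ for real $\sigma$, it suffices to keep every real zero of $L(s,\chi)$ strictly to the left of $1-c(\delta,\varepsilon)(\log q_\chi)^{-\varepsilon}$. For any $\chi\in\mathcal{S}$, Dirichlet's non-vanishing $L(1,\chi)\neq 0$ guarantees that the largest real zero $\beta(\chi)\in[0,1)$ satisfies $1-\beta(\chi)>0$.

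Fix $\delta,\varepsilon>0$ and let $q_0=q_0(\delta,\varepsilon)$ be the effective constant from Theorem~\ref{thm:main}. That theorem permits at most one exceptional $\chi^*\in\mathcal{S}$ with $q_{\chi^*}\geq q_0$ to have a real zero in $[1-(\log q_{\chi^*})^{-\varepsilon},1)$; for every other $\chi\in\mathcal{S}$ with $q_\chi\geq q_0$ the conclusion of the corollary thus holds with constant $1$. For the finitely many $\chi\in\mathcal{S}$ with $3\leq q_\chi<q_0$, I define
\[
c_1(\delta,\varepsilon):=\min\bigl\{(1-\beta(\chi))(\log q_\chi)^{\varepsilon}:\chi\in\mathcal{S},\ 3\leq q_\chi<q_0\bigr\},
\]
which is positive, being a finite minimum of positive numbers, and moreover effective. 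If the exceptional $\chi^*$ exists, set $c^*:=(1-\beta(\chi^*))(\log q_{\chi^*})^{\varepsilon}>0$; otherwise let $c^*:=1$. Then $c(\delta,\varepsilon):=\min\{1,c_1,c^*\}$ fulfils the corollary.

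The sole obstacle is that $c^*$ is ineffective: we cannot effectively bound $1-\beta(\chi^*)$ from below, so while we know $c^*>0$, we cannot evaluate it. Siegel's theorem does supply the quantitative bound $1-\beta(\chi^*)\gg_{\varepsilon'}q_{\chi^*}^{-\varepsilon'}$, but only ineffectively; since we require merely $c^*>0$, this qualitative input already suffices. No further analytic machinery beyond Theorem~\ref{thm:main} is needed.
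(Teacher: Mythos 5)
Your deduction is correct and is exactly the standard argument the paper has in mind when it calls the corollary an ``immediate'' consequence of Theorem~\ref{thm:main}: constant $1$ for all non-exceptional $\chi$ with $q_\chi\geq q_0$, a finite (effective) minimum over the characters of conductor below $q_0$, and one ineffective positive quantity $1-\beta(\chi^*)$ for the possible exceptional character. The only slip is at the endpoint: with $c=\min\{1,c_1,c^*\}$ the point $\sigma=1-c(\log q_\chi)^{-\varepsilon}$ can equal $\beta(\chi)$ for the minimizing character, where $L(\sigma,\chi)=0$; take, say, $c=\tfrac12\min\{1,c_1,c^*\}$ to make the inequality strict.
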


The difference between our proof of Theorem \ref{thm:main} and the proof of \cite[Theorem 1]{SZ2002} is subtle.  We contrast our work with \cite{SZ2002} in Remark \ref{rem:SZ} below.

Variants of the hypothesis $H_{\delta}$ for other families of $L$-functions lead to results for those families similar to Theorem \ref{thm:main}.  After proving Theorem \ref{thm:main}, we will sketch some examples.

\section{Preliminaries}

Fix $0 < \delta < 1/10$.  It suffices to let $0<\varepsilon<1$.  We define
\begin{align}
\label{eqn:q0_def}
q_0 = \exp\Big(\exp\Big(\frac{10\,000}{\delta^3 \varepsilon^2}\Big)\Big).
\end{align}
Let $q_1$ and $q_2$ be positive integers such that
\begin{equation}
\label{eqn:q_inequality}
q_0\leq q_2\leq q_1.
\end{equation}
For $j\in\{1,2\}$, let $\chi_j\pmod{q_j}$ be distinct primitive real Dirichlet characters.  Let $\psi$ be the primitive Dirichlet character that induces $\chi_1 \chi_2$.  It follows that $\psi$ is real, with conductor at most $q_1^2$.  We assume $H_{\delta}$, which implies that $L(s,\chi_1)$, $L(s,\chi_2)$, and $L(s,\psi)$ have no non-real zeros in the disk $|z-1|<\delta$.  The Riemann zeta function $\zeta(s)$ provably has no zero in this disk, so it follows that
\begin{equation}
\label{eqn:D_def}
D(s)=\zeta(s)L(s,\chi_1)L(s,\chi_2)L(s,\psi)
\end{equation}
has no non-real zeros in the disk $|z-1|<\delta$.

Let $\beta_j$ be the greatest real zero of $L(s,\chi_j)$.  We suppose to the contrary that
\begin{equation}
\label{eqn:beta_j}
\beta_1\geq 1-(\log q_1)^{-\varepsilon}\qquad\textup{and}\qquad \beta_2\geq 1-(\log q_2)^{-\varepsilon}.
\end{equation}
It follows from \eqref{eqn:q0_def}, \eqref{eqn:q_inequality}, and \eqref{eqn:beta_j} that
\begin{equation}
\label{eqn:max_beta_j}
\max\{1-\beta_1,1-\beta_2\}\leq (\log q_0)^{-\varepsilon}\leq\frac{\delta}{10}.
\end{equation}
We define
\begin{equation}
\label{eqn:eta_def}
\eta = \frac{\delta}{e}+\beta_2-1.
\end{equation}
It follows from \eqref{eqn:q0_def}, \eqref{eqn:q_inequality}, and \eqref{eqn:beta_j} that
\begin{equation}
\label{eqn:eta_ineq}
\frac{\delta}{2e}\leq \eta\leq \frac{\delta}{e}
\end{equation}

\section{Proof of Theorem \ref{thm:main}}
\label{sec:proof}

We begin with a relation for $\beta_1$, $\beta_2$, and all non-real zeros of $D(s)$.

\begin{lemma}
\label{lem:1}
Let $k\geq 1$ and $\ell\geq 2$ be integers.  If $\omega$ denotes a zero of $D(s)$ and $\eta$ is as in \eqref{eqn:eta_def}, then
\begin{align*}
\label{Final Simplification}
\frac{1}{\eta^{k\ell}}-\frac{1}{(1+\eta-\beta_1)^{k\ell}} \geq \frac{1}{(1+\eta-\beta_2)^{k\ell}} + \mathrm{Re} \sum_{\mathrm{Im}(\omega) \neq 0} \frac{1}{(1+\eta-\omega)^{k\ell}}.
\end{align*}
\end{lemma}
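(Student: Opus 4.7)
The plan is to apply the standard positivity / Hadamard-product method to $D(s)$: differentiate $-D'/D$ exactly $k\ell-1$ times, compare the Dirichlet-series side (non-negative by positivity of the coefficients) with the Hadamard-product side (a pole term $1/(s-1)^{k\ell}$ together with an absolutely convergent sum over zeros), evaluate at $s=1+\eta$, and then discard favorable terms.

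The only delicate input is non-negativity of the Dirichlet coefficients $\Lambda_D(n)$ of $-D'/D(s)$. Because $\psi$ induces $\chi_1\chi_2$ and the two $L$-functions differ only by finitely many Euler factors $(1-\psi(p)p^{-s})^{\pm1}$ that do not vanish for $\mathrm{Re}(s)>0$, I may replace $L(s,\psi)$ by $L(s,\chi_1\chi_2)$ without changing any zero of $D(s)$ in the half-plane of interest. After this replacement, the Euler factor of $D(s)$ at an unramified prime $p$ is $\prod_{\chi\in\{1,\chi_1,\chi_2,\chi_1\chi_2\}}(1-\chi(p)p^{-s})^{-1}$, whose logarithm has Dirichlet coefficients $(1+\chi_1(p^m))(1+\chi_2(p^m))/(mp^{ms})\ge 0$; ramified primes contribute similar non-negative real-character Euler factors. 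Hence $\Lambda_D(n)\ge 0$ for every $n$.

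Given this, I would proceed by routine Hadamard bookkeeping. Summing the expansions $L'/L(s,\chi)=B(\chi)+\sum_{\rho}\bigl(1/(s-\rho)+1/\rho\bigr)+\text{(gamma)}$ for each of the four $L$-factors together with the corresponding expression for $\zeta$ (which carries the extra $-1/(s-1)$), I differentiate $k\ell-1$ times; since $k\ell-1\ge 1$, all constants $B(\chi)$ and $1/\rho$ correctors die, while the gamma terms unfold into a sum over the trivial zeros. Equating the result with the $(k\ell-1)$-st derivative of the Dirichlet series $D'/D(s)=-\sum_n\Lambda_D(n)n^{-s}$ yields the identity
\[
\frac{1}{(k\ell-1)!}\sum_n\Lambda_D(n)(\log n)^{k\ell-1}n^{-s}=\frac{1}{(s-1)^{k\ell}}-\sum_{\rho}\frac{1}{(s-\rho)^{k\ell}},
\]
where $\rho$ ranges over all zeros of $D(s)$ with multiplicity and the series converges absolutely since $k\ell\ge 2$. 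Evaluating at $s=1+\eta$ and using non-negativity of the left-hand side gives $\sum_{\rho}(1+\eta-\rho)^{-k\ell}\le\eta^{-k\ell}$. Taking real parts, isolating the two contributions from $\rho=\beta_1,\beta_2$, and noting that every remaining real zero (trivial zeros at non-positive integers and any additional real non-trivial zeros) satisfies $\rho<1+\eta$ and therefore contributes a positive term $(1+\eta-\rho)^{-k\ell}$ that may be dropped, one recovers the lemma after a trivial rearrangement.

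Main obstacle: the positivity $\Lambda_D(n)\ge 0$ in step one, which is what forces the primitive-to-imprimitive reduction $\psi\mapsto\chi_1\chi_2$. The rest is bookkeeping, and the key simplification is that taking $k\ell-1\ge 1$ derivatives simultaneously kills every auxiliary constant and renders the sum over zeros absolutely convergent.
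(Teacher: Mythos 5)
Your overall strategy is exactly the paper's: differentiate $-D'/D$ a total of $k\ell-1$ times, equate the nonnegative Dirichlet-series side with the Hadamard-product side at $s=1+\eta$, and discard the (positive) contributions of all real zeros other than $\beta_1,\beta_2$. That bookkeeping is correct. The one genuine problem is your treatment of the positivity step. Replacing $L(s,\psi)$ by the imprimitive $L(s,\chi_1\chi_2)$ does \emph{not} leave the zero set of $D(s)$ unchanged in the region relevant to the lemma: the discarded Euler factors $(1-\psi(p)p^{-s})$ at primes $p$ dividing the modulus of $\chi_1\chi_2$ but not $q_\psi$ vanish at the infinitely many points $s$ with $p^{s}=\psi(p)$, which lie on $\mathrm{Re}(s)=0$ and (apart from possibly $s=0$) have $\mathrm{Im}(s)\neq 0$. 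The lemma's sum runs over \emph{all} non-real zeros of $D$, not just those near $s=1$ or in a half-plane, so your argument proves the inequality with a strictly larger zero set on the right-hand side. The contribution of those extra zeros is, up to normalization, the $(k\ell-1)$-th derivative of the logarithmic derivative of the dropped Euler factors, whose sign alternates with the parity of $k\ell$; it is therefore not nonnegative in general and cannot simply be discarded to recover the statement for the paper's $D$.

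The detour is also unnecessary. Positivity of $a_D(n)$ holds for the primitive product as written, because $D(s)=\zeta(s)L(s,\chi_1)L(s,\chi_2)L(s,\psi)$ is the Dedekind zeta function of the biquadratic field $\mathbb{Q}(\sqrt{d_1},\sqrt{d_2})$ by the conductor--discriminant formula, so $-D'/D$ has the nonnegative coefficients $\Lambda_K(n)$; this is the route the paper takes. If you prefer a hands-on verification, note that $1+\chi_1(p^m)+\chi_2(p^m)+\psi(p^m)\geq 0$ at every prime: at unramified $p$ it factors as $(1+\chi_1(p^m))(1+\chi_2(p^m))$, and at ramified $p$ one uses that $\psi(p)=0$ whenever exactly one of $\chi_1(p),\chi_2(p)$ vanishes (if $p\mid q_1$ and $p\nmid q_2$ then $p$ divides the conductor of $\psi$). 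Alternatively, you could keep your imprimitive $\widetilde{D}$ throughout, but then you must carry its extra imaginary-axis zeros into the zero-counting and power-sum lemmas that follow; that works but requires restating the lemma.
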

\begin{proof}
If $\mathrm{Re}(s)>1$, then we have the Dirichlet series expansion
\[
-\frac{D'}{D}(s)=\sum_{\textup{$p$ prime}}\sum_{m=1}^{\infty}\frac{(1+\chi_1(p^m)+\chi_2(p^m)+\psi(p^m))\log p}{p^{ms}}=\sum_{n=1}^{\infty}\frac{a_D(n)}{n^s}.
\]
Since $D(s)$ is the Dedekind zeta function of a biquadratic extension of $\mathbb{Q}$, we have that $a_D(n)\geq 0$ for all $n\geq 1$.  On the other hand, $D(s)$ has a Hadamard product factorization.  In particular, there exist $a_D,b_D\in\mathbb{C}$ such that
\[
(s-1)D(s) = s^{\mathrm{ord}_{s=0}D(s)}e^{a_D+b_D s} \prod_{\substack{\omega\neq 0 \\ D(\omega)=0}}\Big(1-\frac{s}{\omega}\Big)e^{s/\omega}.
\]
We emphasize that $\omega$ ranges over the trivial and nontrivial zeros of each factor of $D(s)$.

When $\mathrm{Re}(s)>1$, we equate the Dirichlet series expansion of $-(D'/D)(s)$ with the logarithmic derivative of the Hadamard product of $D(s)$, thus obtaining
\[
\sum_{n=1}^{\infty}\frac{a_D(n)}{n^s}=\frac{1}{s-1}-b_D-\frac{\mathrm{ord}_{s=0}D(s)}{s}-\sum_{\omega\neq 0}\Big(\frac{1}{s-\omega}+\frac{1}{\omega}\Big).
\]
Let $k\geq 1$ and $\ell\geq 2$ be integers.  We take the real part of the $(k\ell-1)$-th derivative of both sides, arriving at
\[
\frac{1}{(k\ell-1)!}\mathrm{Re}\sum_{n=1}^{\infty}\frac{a_D(n)(\log n)^{k\ell-1}}{n^s}=\mathrm{Re}\Big(\frac{1}{(s-1)^{k\ell}}-\sum_{\omega}\frac{1}{(s-\omega)^{k\ell}}\Big).
\]
Now, the possible trivial zero $\omega=0$ is now included in the sum over $\omega$.

We let $s=1+\eta$.  Since $a_D(n)\geq 0$ uniformly, it follows that
\[
\mathrm{Re}\sum_{\omega}\frac{1}{(1+\eta-\omega)^{k\ell}}<\frac{1}{\eta^{k\ell}}.
\]
Note that if $\omega$ is a real zero of $D(s)$, then $\omega<1$ and $(1+\eta-\omega)^{k\ell}\geq 0$.  Since $\beta_1$ and $\beta_2$ are real zeros of $D(s)$, we conclude via nonnegativity that
\[
\frac{1}{(1+\eta-\beta_1)^{k\ell}}+\frac{1}{(1+\eta-\beta_2)^{k\ell}}+\mathrm{Re}\sum_{\substack{\omega \\ \mathrm{Im}(\omega)\neq 0}}\frac{1}{(1+\eta-\omega)^{k\ell}}\leq \mathrm{Re}\sum_{\omega}\frac{1}{(1+\eta-\omega)^{k\ell}}<\frac{1}{\eta^{k\ell}}.
\]
The desired result follows.
\end{proof}

We define a sequence of complex numbers $\{z_n\}_{n=1}^{\infty}$ as follows.  Let
\begin{align}
\label{eqn:z1_def}
z_1 = (1+\eta-\beta_2)^{-\ell}.
\end{align} 
Choose an ordering $\{\omega_j\}_{j=2}^{\infty}$ of the zeros $\omega$ with $\mathrm{Im}(\omega)\neq 0$ such that if
\[
z_j = (1+\eta-\omega_j)^{-\ell},
\]
then $|z_2|\geq |z_3|\geq\cdots\geq |z_j|\geq|z_{j+1}|\geq \cdots$.  It follows from \eqref{eqn:max_beta_j}, \eqref{eqn:eta_ineq}, and our assumption of $H_{\delta}$ that if $\omega$ is a non-real zero of $D(s)$, then
\[
|1+\eta-\omega|\geq \sqrt{\eta^2+\delta^2}\geq\eta+\frac{\delta}{2}\geq 1+\eta-\beta_2.
\]
In particular, we have that $|z_1|\geq |z_2|\geq |z_3|\geq\cdots$.

\begin{lemma}
\label{lem:zero_count}
Fix $0 < \delta < 1/10$.  Let $\eta$ be as in \eqref{eqn:eta_def}, and let $\ell = \lceil \log \log q_1 \rceil$.  If $H_{\delta}$ is true, then
\[
\sum_{j=1}^{\infty}\frac{|z_j|}{|z_1|}\leq 5.
\]
\end{lemma}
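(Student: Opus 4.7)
The plan is to show that the tail $\sum_{j\ge 2}|z_j|/|z_1|$ is bounded by a negative power of $\log q_1$, so that the full sum is $1+o(1)\le 5$ once $q_1\ge q_0$. From $\eta=\delta/e+\beta_2-1$ we get $1+\eta-\beta_2=\delta/e$, giving $|z_1|=(e/\delta)^\ell$ and, for $j\ge 2$,
\[
\frac{|z_j|}{|z_1|} = \left(\frac{\delta/e}{|1+\eta-\omega_j|}\right)^{\!\ell}.
\]
The paragraph preceding the lemma already supplies $|1+\eta-\omega_j|\ge \rho:=\sqrt{\eta^2+\delta^2}$ for every non-real zero, and $\eta\ge\delta/(2e)$ from \eqref{eqn:eta_ineq} promotes this to $\rho/(\delta/e)\ge\sqrt{e^2+1/4}$. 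The decisive numerical fact is that $\log\sqrt{e^2+1/4}>1$ strictly; this is what turns the choice $\ell=\lceil\log\log q_1\rceil$ into a genuine power savings of $\log q_1$.

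The heart of the proof is the classical bound
\[
\sum_{\omega\text{ non-real}}\frac{1}{|1+\eta-\omega|^{2}}\ll \frac{\log q_1}{\delta},
\]
with absolute implied constant. This follows from the partial-fraction decomposition of $-L'/L$ at $s=1+\eta$ (via the Hadamard product and functional equation) applied to each of the four $L$-functions in $D(s)=\zeta(s)L(s,\chi_1)L(s,\chi_2)L(s,\psi)$, whose conductors are at most $q_1^2$; the loss of $1/\delta$ comes from $\mathrm{Re}\bigl(1/(1+\eta-\omega)\bigr)=(1+\eta-\mathrm{Re}\,\omega)/|1+\eta-\omega|^2\ge \eta/|1+\eta-\omega|^2$ together with $\eta\ge\delta/(2e)$. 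Using $|1+\eta-\omega_j|\ge\rho$ to extract $\ell-2$ factors gives
\[
\sum_{j\ge 2}|z_j|\le \rho^{-(\ell-2)}\sum_{\omega}\frac{1}{|1+\eta-\omega|^{2}}\ll \frac{\rho^{-(\ell-2)}\log q_1}{\delta},
\]
and dividing by $|z_1|=(\delta/e)^{-\ell}$ and using $\rho^2\le 2\delta^2$ produces
\[
\sum_{j\ge 2}\frac{|z_j|}{|z_1|}\ll \frac{\log q_1}{\delta}\cdot (e^2+1/4)^{-\ell/2}.
\]

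Substituting $\ell=\lceil\log\log q_1\rceil$ and using $\log\sqrt{e^2+1/4}>1$ converts this into $\delta^{-1}(\log q_1)^{1-\log\sqrt{e^2+1/4}}=\delta^{-1}(\log q_1)^{-c}$ for some fixed $c>0$. By \eqref{eqn:q0_def}, the factor $(\log q_1)^{-c}$ beats $\delta^{-1}$ by an enormous margin once $q_1\ge q_0$, so the tail is at most $4$ and the full sum at most $5$. The main obstacle is extracting the bound $\sum 1/|1+\eta-\omega|^2\ll \delta^{-1}\log q_1$ with enough control on the implied constant to verify the explicit numerical target $5$ rather than merely a qualitative $O(1)$; once that is in hand, the rest is routine. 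The tightness of the entire setup is felt in the strict inequality $\log\sqrt{e^2+1/4}>1$, which would fail if $\eta$ were allowed to be too close to $0$ — this is precisely why the definition \eqref{eqn:eta_def} keeps $\eta$ comparable to $\delta/e$.
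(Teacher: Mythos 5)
Your argument is correct, and its skeleton matches the paper's (isolate $j=1$, use the lower bound $|1+\eta-\omega|\geq\sqrt{\eta^2+\delta^2}$ forced by $H_{\delta}$ and \eqref{eqn:eta_ineq} to make each remaining ratio decay exponentially in $\ell$, and control the number of contributing zeros by $O(\log q_1)$), but your source for the counting input is genuinely different. The paper invokes an explicit vertical zero-counting theorem for $N_D(T)$, splits the zeros at height $1$, and integrates by parts against $t^{-\ell}\,dN_D(t)$; you instead peel off two factors of $|1+\eta-\omega_j|^{-1}$ and appeal to the classical bound $\sum_{\omega}|1+\eta-\omega|^{-2}\ll \eta^{-2}+\eta^{-1}\log q_1$ coming from the partial-fraction expansion of $D'/D$ at $s=1+\eta$ (here $\eta^{-2}\ll\eta^{-1}\log q_1$ by \eqref{eqn:q0_def}, so your stated form is fine). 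Your route has two pleasant features: it avoids citing an explicit $N_D(T)$ formula, and because you retain the full distance $\sqrt{\eta^2+\delta^2}\geq(\delta/e)\sqrt{e^2+1/4}$ rather than the cruder $\delta$, you obtain a genuine power saving $(\log q_1)^{-c}$ with $c=\log\sqrt{e^2+1/4}-1>0$, so your tail is $o(1)$ rather than merely bounded. This also disposes of the obstacle you flag at the end: no numerically explicit implied constant is needed, since by \eqref{eqn:q0_def} and \eqref{eqn:q_inequality} the factor $(\log q_1)^{-c}\leq\exp(-c\cdot 10^4\delta^{-3}\varepsilon^{-2})$ annihilates $\delta^{-1}$ times any absolute constant produced by the standard partial-fraction argument. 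The paper, by contrast, bounds each low-lying ratio only by $e^{-\ell}\leq(\log q_1)^{-1}$, which exactly cancels the $O(\log q_1)$ count and leaves a tail of roughly $3(1+1/\pi)<4$; that is why its threshold is $5$ rather than $1+o(1)$.
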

\begin{proof}
Write $\omega_j=\beta_j+i\gamma_j$.  Recall that our assumption of $H_{\delta}$ implies that $D(s)$ has no non-real zeros in the disk $|z-1|<\delta$.  Consequently, $H_{\delta}$ implies that if $j\geq 2$ and $|\gamma_j|\leq 1$, then $|z_j|\leq \delta^{-\ell}$.  Per \eqref{eqn:eta_def} and \eqref{eqn:z1_def}, we have that $|z_1|=(\delta/e)^{-\ell}$.  Therefore, if
\[
N_D(T)=\#\{\omega\colon D(\omega)=0,~|\mathrm{Im}(\omega)|\leq T,~0\leq \mathrm{Re}(\omega)\leq 1\},
\]
then the sum to be estimated is at most
\begin{equation}
\label{eqn:Kbound}
1+\frac{\delta^{-\ell}}{|z_1|}\sum_{|\gamma_j|\leq 1}1+\frac{1}{|z_1|}\int_{1}^{\infty}\frac{dN_D(t)}{t^{\ell}}\leq1+\frac{\delta^{-\ell}}{(\delta/e)^{-\ell}}N_D(1)+\frac{\ell }{(\delta/e)^{-\ell}}\int_{1}^{\infty}\frac{N_D(t)}{t^{\ell+1}}dt.	
\end{equation}
 It follows from \cite[Corollary 1.2]{MR4232231} that if $T\geq 5/7$, then
\[
\Big|N_D(T)-\frac{T}{\pi}\log\Big(q_1 q_2 q_{\psi}\Big(\frac{T}{2\pi e}\Big)^4\Big)\Big|\leq \log(q_1 q_2 q_{\psi}T^4)+28.
\]
Recall that $q_2\leq q_1$ and $q_{\psi}\leq q_1 q_2$.  Thus, if $T\geq 1$, then
\[
N_D(T)\leq \frac{T}{\pi}\log\Big(q_1^3\Big(\frac{T}{2\pi e}\Big)^4\Big)+\log(q_1^3 T^4)+28.
\]
Since $\ell\geq 3$, it follows that \eqref{eqn:Kbound} is
\[
\leq 1+e^{-\ell}\Big(28-\frac{4\log(2\pi e)}{\pi}+3\Big(1+\frac{1}{\pi}\Big)\log q_1\Big)+3(\delta/e)^{\ell}\Big(\Big(1+\frac{1}{(1-\ell^{-1})\pi}\Big)\log q_1-1\Big).
\]
The desired result now follows from our choice of $\ell$, our range of $\delta$, and the bounds \eqref{eqn:q0_def} and \eqref{eqn:q_inequality}.
\end{proof}

\begin{lemma}
\label{power sum}
Fix $0 < \delta < 1/10$.  If $H_{\delta}$ is true, then there exists an integer $1\leq k\leq 120$ such that
\[
\frac{1}{(1+\eta-\beta_2)^{k\ell}} + \mathrm{Re} \sum_{\mathrm{Im}(\omega) \neq 0} \frac{1}{(1+\eta-\omega)^{k\ell}}\geq \frac{1}{8(1+\eta-\beta_2)^{k\ell}}.
\]
\end{lemma}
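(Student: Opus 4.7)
The plan is to normalize by dividing the desired inequality through by $(1+\eta-\beta_2)^{-k\ell}=z_1^k$ and reduce it to a purely quantitative statement about the normalized tail. Writing $w_j:=z_j/z_1=((1+\eta-\beta_2)/(1+\eta-\omega_j))^{\ell}$ for $j\ge 2$, the claim amounts to exhibiting $1\le k\le 120$ with
\[
\mathrm{Re}\sum_{j\ge 2} w_j^k \;\ge\; -\tfrac{7}{8}.
\]
Since $\lvert\mathrm{Re}\sum_{j\ge 2} w_j^k\rvert\le\sum_{j\ge 2}\lvert w_j\rvert^k$, I will in fact establish the stronger bound $\sum_{j\ge 2}\lvert w_j\rvert^k\le 7/8$.

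The key input is a uniform pointwise estimate: $\lvert w_j\rvert\le e^{-\ell}$ for every $j\ge 2$. Both ingredients are already available in the excerpt. The paragraph preceding Lemma~\ref{lem:zero_count} gives that every non-real zero $\omega$ of $D(s)$ satisfies $|1+\eta-\omega|\ge\sqrt{\eta^2+\delta^2}\ge\delta$, whence $|z_j|\le\delta^{-\ell}$; and by \eqref{eqn:eta_def} together with \eqref{eqn:z1_def}, $1+\eta-\beta_2=\delta/e$, so $|z_1|=(e/\delta)^{\ell}$. Forming the ratio gives $\lvert w_j\rvert\le e^{-\ell}$.

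Combining this pointwise bound with Lemma~\ref{lem:zero_count} (which yields $\sum_{j\ge 2}\lvert w_j\rvert\le 4$), a H\"older-type factorisation gives, for any integer $k\ge 2$,
\[
\sum_{j\ge 2}\lvert w_j\rvert^{k} \;\le\; \Big(\max_{j\ge 2}\lvert w_j\rvert\Big)^{k-1}\sum_{j\ge 2}\lvert w_j\rvert \;\le\; 4e^{-(k-1)\ell}.
\]
Since $q_1\ge q_0$ with $q_0$ as in \eqref{eqn:q0_def}, the choice $\ell=\lceil\log\log q_1\rceil$ is enormous (a fortiori $\ell\ge 3$), so the already-trivial choice $k=2$ gives $\sum_{j\ge 2}\lvert w_j\rvert^{2}\le 4e^{-\ell}\ll 7/8$, which lies comfortably inside the allowed range $1\le k\le 120$.

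The only thing one really has to check is that nothing in the setup creates a $w_j$ with modulus close to $1$: this is exactly what $H_\delta$ rules out, by pushing every non-real zero outside $|z-1|<\delta$ and thereby opening the fixed pointwise gap $|w_j|\le e^{-\ell}$. No genuine Tur\'an-type argument is needed here; the generous allowance $1\le k\le 120$ presumably leaves room for the variants alluded to at the end of the introduction, where $|w_2|/|w_1|$ could be closer to~$1$ and one would have to invoke a true power sum inequality to locate an acceptable $k$ in a bounded range.
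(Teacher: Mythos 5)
Your proof is correct, but it is genuinely different from the one in the paper. The paper proves this lemma by a direct appeal to Tur\'an's power sum theorem (in the form: if $K=\sum_j|y_j|/|y_1|$, then some $1\le k\le 24K$ satisfies $\mathrm{Re}\sum_j y_j^k\ge\frac18|y_1|^k$), using only the bound $K\le 5$ from Lemma \ref{lem:zero_count}; this is where the range $1\le k\le 120=24\cdot 5$ comes from. You instead observe that the specific normalization $1+\eta-\beta_2=\delta/e$ from \eqref{eqn:eta_def} and \eqref{eqn:z1_def}, combined with the fact that $H_\delta$ pushes every non-real zero to distance at least $\sqrt{\eta^2+\delta^2}\ge\delta$ from $1+\eta$, creates a pointwise gap $|z_j|/|z_1|\le e^{-\ell}$ for $j\ge 2$; together with the $\ell^1$-bound $\sum_{j\ge2}|z_j|/|z_1|\le 4$ from Lemma \ref{lem:zero_count}, the triangle inequality then shows the non-real contribution is at most $4e^{-(k-1)\ell}$ in absolute value, so already $k=2$ works (and $\ell\ge 3$ makes $4e^{-\ell}<7/8$ with an enormous margin, given \eqref{eqn:q0_def}). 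Your route is more elementary and yields an explicit $k$, which would even sharpen the numerical constants appearing later in \eqref{Big-Oh Step}; the paper's route via Tur\'an is less dependent on the particular choice of $\eta$, since it needs no separation between $|z_1|$ and $|z_2|$ and would survive in situations where non-real zeros are permitted as close to $s=1+\eta$ as $\beta_2$ is --- your closing remark correctly identifies this as the reason the power sum method is invoked. Both arguments rely on exactly the same inputs ($H_\delta$ and Lemma \ref{lem:zero_count}), so nothing is missing.
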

\begin{proof}
Let $\{y_j\}_{j=1}^{\infty}$ be a sequence of complex numbers such that $|y_1|\geq |y_2|\geq |y_3|\geq\cdots$. Set
\[
K=\sum_{j \geq 1} \frac{|y_j|}{|y_1|}.
\]
Tur{\'a}n proved that there exists $1 \leq k \leq 24 K$ such that
\[
\mathrm{Re} \sum_{j \geq 1} y_j^k \geq \frac{1}{8} \left|y_1\right|^k
\]
(see \cite[Chapter 9, Lemma 2]{M1994}).  We apply Tur{\'a}n's result to $\{z_j\}_{j=1}^{\infty}$, bounding $K$ from above using Lemma \ref{lem:zero_count}.
\end{proof}
It follows from Lemmata \ref{lem:1} and \ref{power sum} that
\begin{align*}
\frac{1}{\eta^{k\ell}}-\frac{1}{(1+\eta-\beta_1)^{k\ell}} \geq \frac{1}{8(1+\eta-\beta_2)^{k\ell}}.
\end{align*}
Multiplying through by $\eta^{k\ell}$, and observing that $\lceil \log\log q_1\rceil\leq 2\log\log q_1$, we find that
\begin{align}\label{Rewriting}
    1-\Big(1- \frac{1-\beta_1}{1+\eta-\beta_1} \Big)^{2k \log \log q_1} \geq \frac{1}{8}\Big( 1-\frac{1-\beta_2}{1+\eta-\beta_2} \Big)^{2k \log \log q_1}.
\end{align}
Since $\beta_1,\beta_2\in(0,1)$ and $\eta>0$, we find that
\[
\frac{1-\beta_1}{1+\eta-\beta_1},\frac{1-\beta_2}{1+\eta-\beta_2}\in(0,1).
\]
\begin{lemma}[Bernoulli's inequality]
\label{lem:Taylor}
If $0<a<1$ and $b>1$, then $ab > 1-(1-a)^b$.
\end{lemma}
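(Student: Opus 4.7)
The plan is to reduce the claim to the standard form of Bernoulli's inequality and then handle strictness by a derivative check. Writing the claim as $(1-a)^b > 1 - ab$, one recognizes this as the $b > 1$ version of Bernoulli with base $1-a \in (0,1)$, so the only real work is verifying strict inequality under the stated hypotheses.

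Concretely, I would set $f(a) = (1-a)^b - (1-ab)$ and view $b$ as a fixed parameter with $b > 1$. Observe $f(0) = 0$, and compute
\[
f'(a) = -b(1-a)^{b-1} + b = b\bigl(1 - (1-a)^{b-1}\bigr).
\]
For $0 < a < 1$ and $b > 1$ we have $0 < 1-a < 1$ and $b-1 > 0$, so $(1-a)^{b-1} < 1$, and hence $f'(a) > 0$ throughout $(0,1)$. Since $f$ is continuous on $[0,1)$ with $f(0)=0$ and strictly increasing derivative-positive on $(0,1)$, we obtain $f(a) > 0$ for every $a \in (0,1)$, i.e.\ $(1-a)^b > 1 - ab$, which rearranges to the claimed inequality $ab > 1 - (1-a)^b$.

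There is no serious obstacle here; the only subtlety is to make sure the inequality is strict rather than the $\geq$ that classical Bernoulli gives, and the derivative sign analysis handles this cleanly because both hypotheses $a \in (0,1)$ and $b > 1$ are needed to guarantee $(1-a)^{b-1} < 1$ strictly. As an alternative one could give a one-line derivation from the integral representation $(1-a)^b = 1 - b\int_0^a (1-t)^{b-1}\,dt$, noting that the integrand is strictly less than $1$ on $(0,a)$ so the integral is strictly less than $a$, yielding the same conclusion; I would likely present whichever version reads most smoothly in context.
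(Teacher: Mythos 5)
Your proof is correct: the derivative computation $f'(a) = b\bigl(1-(1-a)^{b-1}\bigr) > 0$ together with $f(0)=0$ cleanly establishes the strict inequality, and both hypotheses are used exactly where needed. The paper states this lemma without proof as a classical fact, and your argument is the standard one, so there is nothing to reconcile.
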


\begin{proof}[Proof of Theorem \ref{thm:main}]
Applying Lemma \ref{lem:Taylor} to the left-hand side of \eqref{Rewriting} with
\[
a=\frac{1-\beta_1}{1+\eta-\beta_1},\qquad b = 2k\log\log q_1,
\]
we obtain the bound
\begin{align*}
\frac{2k(1-\beta_1) \log \log q_1}{1+\eta-\beta_1} &\geq \frac{1}{8}\Big( 1-\frac{1-\beta_2}{1+\eta-\beta_2} \Big)^{2k \log \log q_1}=\frac{1}{8}(\log q_1)^{2k \log\big(1-\frac{1-\beta_2}{1+\eta-\beta_2}\big)}.
\end{align*}
(We have that $b\geq 2$ by \eqref{eqn:q0_def} and \eqref{eqn:q_inequality}.)  Dividing through by $(2k\log\log q_1)/(1+\eta-\beta_1)$, we find that
\begin{equation}
\label{Big-Oh Step}
1-\beta_1\geq \frac{1+\eta-\beta_1}{16k\log\log q_1}(\log q_1)^{2k \log\big(1-\frac{1-\beta_2}{1+\eta-\beta_2}\big)}\geq \frac{\eta}{1920 \log\log q_1}(\log q_1)^{240 \log\big(1-\frac{1-\beta_2}{1+\eta-\beta_2}\big)}.\hspace{-1mm}
\end{equation}
It follows from \eqref{eqn:q0_def}, \eqref{eqn:q_inequality}, and \eqref{eqn:eta_ineq} that
\[
\varepsilon>2\frac{\log(1920 \eta^{-1}\log\log q_1)}{\log\log q_1},
\]
so
\begin{align}
\label{eqn:epsilon_range2}
\frac{\eta}{1920 \log\log q_1}(\log q_1)^{240 \log\big(1-\frac{1-\beta_2}{1+\eta-\beta_2}\big)} \geq (\log q_1)^{240 \log\big(1-\frac{1-\beta_2}{1+\eta-\beta_2}\big)-\varepsilon/2}.
\end{align}
Therefore, by \eqref{eqn:beta_j}, \eqref{Big-Oh Step}, and \eqref{eqn:epsilon_range2}, we find that
\begin{align}
\label{theta step}
(\log q_1)^{-\varepsilon}\geq (\log q_1)^{240\log\big(1-\frac{1-\beta_2}{1+\eta-\beta_2}\big)-\varepsilon/2}.
\end{align}
Recalling the definition of $\eta$ in \eqref{eqn:eta_def}, we solve for $\beta_2$ in \eqref{theta step}, thus obtaining
\begin{equation}
\label{eqn:finalbound}
\beta_2\leq 1-\frac{\delta}{e}(1-e^{-\varepsilon/480}).	
\end{equation}
However, by \eqref{eqn:q0_def} and \eqref{eqn:q_inequality}, the bound \eqref{eqn:finalbound} contradicts \eqref{eqn:beta_j}, as desired.
\end{proof}

\begin{remark}
\label{rem:SZ}
In \cite[Proof of Theorem 1]{SZ2002}, Sarnak and Zaharescu begin with the Guinand--Weil explicit formula for $D(s)$:  If $B>0$ and
\[
\phi(x) = \Big(\frac{\sin(2\pi x)}{2\pi x}\Big)^2,\qquad \widehat{\phi}(y)=\int_{-\infty}^{\infty}\phi(x)e^{-2\pi i x y}dx=\begin{cases}
\frac{1}{2}(1-\frac{|y|}{2})&\mbox{if $|y|\leq 2$,}\\
0&\mbox{otherwise,}	
\end{cases}
\]
and $\rho=\beta+i\gamma$ runs through the nontrivial zeros of $D(s)$, then
\begin{equation}
\label{eqn:Weil}
\begin{aligned}
\sum_{\rho}\phi\Big(\frac{B}{2\pi i}\Big(\rho-\frac{1}{2}\Big)\Big)&+\frac{2}{B}\sum_{n=1}^{\infty}\frac{(1+\chi_1(n)+\chi_2(n)+\psi(n))\Lambda(n)}{\sqrt{n}}\widehat{\phi}\Big(\frac{\log n}{B}\Big)\\
&=\frac{\log(q_{\psi}q_{\chi_1}q_{\chi_2})+O(1)}{2B}+2\phi\Big(\frac{B}{4\pi i}\Big)+2\phi\Big(-\frac{B}{4\pi i}\Big).
\end{aligned}
\end{equation}
Note that $\phi(0)=1$, $\phi(y)\geq 0$ for $y\in\mathbb{R}\cup i\mathbb{R}$, and $\widehat{\phi}(y)\geq 0$ for $y\in\mathbb{R}$.  Therefore, they can discard the sum over $n$ by the nonnegativity of the Dirichlet coefficients.  Assuming that $\beta=\frac{1}{2}$ or $\gamma=0$ always, they discard the contribution from each $\rho$ except for $\rho=\beta_1$ and $\rho=\beta_2$.  The conclusion \eqref{eqn:SarnakZaharescu} now follows; otherwise, they would obtain a contradiction by choosing $B = (1+3\varepsilon/4)\log\log q_1$.  See also the discussion in \cite[Section 5]{Iwaniec}, especially the remark at the end of the section.

In our proof of Theorem \ref{thm:main}, we cannot afford to discard all of the terms in the sum over zeros.  At the same time, it is unclear how to obtain a strong lower bound on the sum over non-real zeros in \eqref{eqn:Weil}.  We circumvent this problem by taking the $(k\ell-1)$-th derivative of $-(D'/D)(s)$, expressed both as a Dirichlet series and in terms of its Hadamard factorization, and bounding the sum over zeros from below using Tur{\'a}n's power sum method.  However, the power sum method will fail us if $k\ell-1$ is larger than $O(\log\log q_1)$; we handle this using $H_{\delta}$.
\end{remark}

\section{Extensions of Theorem \ref{thm:main}}

We briefly describe a way to extend Theorem \ref{thm:main} to Dirichlet characters whose order exceeds $2$.  Let $m\geq 2$ be an integer, let  $j\in\{1,2\}$, let $q_j\geq 3$ be an integer, and let $\chi_j\pmod{q_j}$ be a primitive Dirichlet character of order dividing $m$.  Note that $L(s,\chi_1)$ and $L(s,\overline{\chi}_1)$ have the same real zeros, so we assume that $\chi_2\notin\{\chi_1,\overline{\chi}_1\}$.  Let $G=\langle \chi_1,\chi_2\rangle$ be the group of Dirichlet characters generated by $\chi_1$ and $\chi_2$, and let $G^*$ be the set of primitive Dirichlet characters that induce the characters in $G$.  The Dirichlet series $D(s) = \prod_{\chi\in G^*}L(s,\chi)$ is the Dedekind zeta function of the compositum of two cyclic extensions of degree $m$ over $\mathbb{Q}$.  In particular, the Dirichlet coefficients of $D(s)$ are nonnegative.  Also, the conductor of $D(s)$ is bounded by $(q_1 q_2)^{m^2}$.  The identity element of a group is unique, so $D(s)$ has a simple pole at $s=1$ arising from the factor of $\zeta(s)$.  Note that if $\chi_2\in\langle\chi_1\rangle-\{\chi_1,\overline{\chi}_1\}$, then $D(s)$ is the product of $L$-functions of primitive characters that induce the characters in $\{\chi_1^j\colon 0\leq j\leq m-1\}$.  In order to ensure that all zeros of $D(s)$ in the disk $|z-1|<\delta$ are real, it suffices to assume for some fixed $0<\delta<1/10$ that for all $\nu\in\mathcal{S}_{m}$, all zeros of $L(s,\nu)$ in the disk $|z-1|<\delta$ are real.  Small modifications to our proof of Theorem \ref{thm:main} yield the following result.

\begin{theorem}\label{Higher Order Characters}
Fix an integer $m \geq 2$, and let $\mathcal{S}_{m}$ be the set of primitive Dirichlet characters of order dividing $m$.  Let $\sim$ be the equivalence relation on $\mathcal{S}_m$ defined by $\chi_1\sim\chi_2$ if $\chi_2\in\{\chi_1,\overline{\chi}_1\}$, and let $[\chi]$ be the equivalence class of $\chi\in\mathcal{S}_m$ in $\mathcal{S}_m/\sim$. Fix $0 < \delta < 1/10$.  Assume that if $\nu\in\mathcal{S}_{m}$, then all zeros of $L(s,\nu)$ in the disk $|z-1|<\delta$ are real.  For all $\varepsilon>0$, there exists an effectively computable constant $q_0=q_0(\delta,\varepsilon,m)>0$ such that
\[
\#\{\textup{$[\chi]\in \mathcal{S}_m/\sim$ : $q_{\chi}\geq q_0$ and $L(s,\chi)$ has a real zero in $[1-(\log q_{\chi})^{-\varepsilon},1)$}\}\leq 1.
\]
\end{theorem}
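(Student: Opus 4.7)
My plan is to mimic the proof of Theorem \ref{thm:main} using the higher-order $D(s)$ constructed in the paragraph preceding the statement. Suppose for contradiction that two distinct equivalence classes $[\chi_1], [\chi_2] \in \mathcal{S}_m/\sim$ have representatives with $q_{\chi_j} \geq q_0$ and $L(s,\chi_j)$ having a real zero $\beta_j \in [1-(\log q_{\chi_j})^{-\varepsilon},1)$. Relabel so that $q_{\chi_2} \leq q_{\chi_1}$. Because $[\chi_1] \neq [\chi_2]$, we have $\chi_2 \notin \{\chi_1,\overline{\chi}_1\}$, exactly the setup of the paragraph before Theorem \ref{Higher Order Characters}. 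Form $G = \langle\chi_1,\chi_2\rangle$ and $D(s) = \prod_{\chi \in G^*} L(s,\chi)$, which has nonnegative Dirichlet coefficients, a simple pole at $s=1$, degree $|G^*| \leq m^2$, conductor at most $(q_{\chi_1} q_{\chi_2})^{m^2}$, and (by the hypothesis that $L(s,\nu)$ has only real zeros in $|z-1|<\delta$ for every $\nu\in\mathcal{S}_m$) no non-real zeros in $|z-1|<\delta$. Since $L(s,\chi_j)$ divides $D(s)$, each $\beta_j$ is a real zero of $D(s)$.

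Next, the analogue of Lemma \ref{lem:1} carries over verbatim. One expands $-D'/D$ both as a Dirichlet series (whose coefficients remain nonnegative, being those of a Dedekind zeta function) and via the Hadamard product, takes the real part of the $(k\ell-1)$-th derivative, and specializes at $s = 1+\eta$ with $\eta = \delta/e + \beta_2 - 1$ as in \eqref{eqn:eta_def}. Nonnegativity of each term $\mathrm{Re}\,(1+\eta-\omega)^{-k\ell}$ associated with a real zero, combined with the positivity of the left-hand side, yields the identical inequality
\[
\frac{1}{\eta^{k\ell}} - \frac{1}{(1+\eta-\beta_1)^{k\ell}} \geq \frac{1}{(1+\eta-\beta_2)^{k\ell}} + \mathrm{Re}\sum_{\mathrm{Im}(\omega)\neq 0} \frac{1}{(1+\eta-\omega)^{k\ell}}.
\]

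For the analogue of Lemma \ref{lem:zero_count}, I would set $\ell = \lceil \log\log q_{\chi_1}\rceil$ and apply the Riemann-von Mangoldt bound from \cite[Corollary 1.2]{MR4232231} to each of the at most $m^2$ factors of $D(s)$, giving $N_D(T) \ll_m T\log((q_{\chi_1} q_{\chi_2})^{m^2} T^{m^2})$ for $T\geq 1$. Inserting this into the same integral estimate as in the proof of Lemma \ref{lem:zero_count}, the inequalities $|1+\eta-\omega|\geq \eta + \delta/2$ (valid for non-real $\omega$) and $|z_1| = (\delta/e)^{-\ell}$ still hold, so one obtains a bound $K = \sum_{j}|z_j|/|z_1| \leq C_1(m)$ for some constant $C_1(m)$ depending only on $m$, once $q_0$ is chosen large enough in terms of $m$.

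Turán's power sum method (as in Lemma \ref{power sum}) then supplies an integer $1 \leq k \leq 24\,C_1(m)$ for which the analogous power-sum lower bound holds, and the remaining algebraic manipulations of Section \ref{sec:proof} (invoking Bernoulli and the definition of $\eta$) go through unchanged, producing a contradictory upper bound of the shape $\beta_2 \leq 1 - (\delta/e)(1 - e^{-\varepsilon/C_2(m)})$. This forces $q_0 = q_0(\delta,\varepsilon,m)$ to be chosen like $\exp(\exp(C_3(m)/(\delta^3\varepsilon^2)))$ in analogy with \eqref{eqn:q0_def}. The main obstacle, and the only genuinely new bookkeeping, is tracking the polynomial-in-$m$ dependence throughout the zero-count and Turán steps with enough care to guarantee that the final contradiction still materializes; everything else is a cosmetic rewrite of Section \ref{sec:proof}.
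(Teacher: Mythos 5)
Your proposal matches the paper's intended argument: the paper itself only sketches this proof, stating that the paragraph preceding the theorem (constructing $D(s)=\prod_{\chi\in G^*}L(s,\chi)$ with nonnegative coefficients, simple pole, conductor at most $(q_1q_2)^{m^2}$, and no non-real zeros in $|z-1|<\delta$ under the hypothesis) combined with ``small modifications'' to the proof of Theorem \ref{thm:main} suffices, and your outline carries out exactly those modifications — the same $D(s)$, the same derivative/Hadamard identity, the $m$-dependent zero count feeding Tur\'{a}n's power sum, and the same closing algebra. The only bookkeeping you flag (tracking the $m$-dependence through $N_D(T)$, the Tur\'{a}n range of $k$, and the choice of $q_0$) is precisely what the paper leaves implicit, so your proposal is correct and essentially identical in approach.
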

\begin{remark}
Theorem \ref{Higher Order Characters} recovers Theorem \ref{thm:main} when $m=2$.
\end{remark}

Let $\phi(x+iy)$ be a Hecke--Maa{\ss} cusp form on the modular surface $\mathrm{SL}_2(\mathbb{Z})\backslash\mathbb{H}$ with Laplace eigenvalue $\lambda_{\phi}>0$, and let $\mathrm{Sym}^2\phi$ be its symmetric square lift.  Gelbart and Jacquet \cite{GJ1978} proved that there exists a cuspidal automorphic representation of $\mathrm{GL}_3$ whose $L$-function is $L(s,\mathrm{Sym}^2\phi)$.  In \cite[Appendix]{HoffsteinLockhart}, Goldfeld, Hoffstein, and Lieman proved that there exists an absolute and effectively computable constant $c_4>0$ such that
\[
L(\sigma,\mathrm{Sym}^2\phi)\neq 0,\qquad \sigma\geq 1-c_4/(\log\lambda_{\phi}).
\]
By modifying our proof of Theorem \ref{thm:main} in a manner similar to the work of Hoffstein and Lockhart in \cite{HoffsteinLockhart}, one can prove the following result.

\begin{theorem}\label{Adjoint}
Fix $0 < \delta < 1/10$.  Let $\mathfrak{S}$ be the set of Hecke--Maa{\ss} cusp forms on $\mathrm{SL}_2(\mathbb{Z})\backslash \mathbb{H}$.  Suppose that for all $\phi_1,\phi_2\in\mathfrak{S}$ satisfying $\phi_1\neq\phi_2$, all zeros of
\[
\zeta(s)L(s,\mathrm{Sym}^2\phi_1)L(s,\mathrm{Sym}^2\phi_2)L(s,\mathrm{Sym}^2\phi_1\times \mathrm{Sym}^2\phi_2)
\]
in the disk $|z-1|<\delta$ are real.  For all $\varepsilon>0$, there exists an effectively computable constant $\lambda_0=\lambda_0(\delta,\varepsilon)$ such that
\begin{align*}
\#\{\textup{$\phi\in\mathfrak{S}$: $\lambda_{\phi}\geq \lambda_0$ and $L(s,\mathrm{Sym}^2\phi)$ has a real zero in $[1-(\log \lambda_{\phi})^{-\varepsilon},1)$}\}\leq 1.	
\end{align*}
\end{theorem}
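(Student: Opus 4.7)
The plan is to mirror the proof of Theorem~\ref{thm:main}, replacing the biquadratic Dedekind zeta function by
\[
D(s)=\zeta(s)\,L(s,\mathrm{Sym}^2\phi_1)\,L(s,\mathrm{Sym}^2\phi_2)\,L(s,\mathrm{Sym}^2\phi_1\times\mathrm{Sym}^2\phi_2).
\]
Assume, toward contradiction, that distinct $\phi_1,\phi_2\in\mathfrak{S}$ with $\lambda_0\leq\lambda_{\phi_2}\leq\lambda_{\phi_1}$ produce real zeros $\beta_j\geq 1-(\log\lambda_{\phi_j})^{-\varepsilon}$ of $L(s,\mathrm{Sym}^2\phi_j)$, with $\lambda_0=\lambda_0(\delta,\varepsilon)$ modeled on \eqref{eqn:q0_def}, and define $\eta$ by \eqref{eqn:eta_def} using $\beta_2$. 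The standing hypothesis guarantees that $D(s)$ has only real zeros in $|z-1|<\delta$.

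The crucial structural input is that $-D'/D$ has nonnegative Dirichlet coefficients. Let $\sigma=\phi_1\boxtimes\phi_2$ be the automorphic representation on $\mathrm{GL}_4$ furnished by Ramakrishnan's tensor-product lift. Since $\phi_1,\phi_2$ have level one, every prime is unramified and a direct Satake-parameter computation gives $D(s)=L(s,\sigma\times\widetilde\sigma)$; explicitly, the coefficient of $p^{-ms}/m$ in $\log D_p(s)$ equals $(\alpha_{1,p}^m+\alpha_{1,p}^{-m})^2(\alpha_{2,p}^m+\alpha_{2,p}^{-m})^2\geq 0$, where $\{\alpha_{j,p}^{\pm 1}\}$ are the Satake parameters of $\phi_j$ at $p$ (both sums are real by self-duality of $\phi_j$, with no appeal to the Ramanujan conjecture). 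Equivalently, this is the standard Jacquet--Shalika positivity for $L(s,\sigma\times\widetilde\sigma)$.

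The Hadamard product step proving Lemma~\ref{lem:1} then transplants verbatim, since $D(s)$ has a simple pole at $s=1$ from $\zeta(s)$ and is entire of order one elsewhere: each $L(s,\mathrm{Sym}^2\phi_j)$ is entire by Gelbart--Jacquet, and the Rankin--Selberg factor is entire because $\mathrm{Sym}^2\phi_1\not\cong\mathrm{Sym}^2\phi_2$ (otherwise $\phi_2\cong\phi_1\otimes\chi$ with $\chi^2=1$, which on level one forces $\chi=1$ and $\phi_1=\phi_2$). For the analog of Lemma~\ref{lem:zero_count}, the analytic conductor of $D(s)$ is polynomial in $\lambda_{\phi_1}$: the archimedean conductor of $L(s,\mathrm{Sym}^2\phi_j)$ is controlled by a power of $1+|t_{\phi_j}|$ (with $\lambda_{\phi_j}=\tfrac14+t_{\phi_j}^2$), and the $\mathrm{GL}_3\times\mathrm{GL}_3$ Rankin--Selberg archimedean conductor is polynomial in these by standard estimates. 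The Riemann--von Mangoldt bound of \cite{MR4232231} then yields $N_D(T)\ll T\log(C(D)T^{16})$, and with $\ell=\lceil\log\log\lambda_{\phi_1}\rceil$ the proof of Lemma~\ref{lem:zero_count} carries over under the substitution $\log q_1\mapsto\log\lambda_{\phi_1}$.

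Turán's power sum method (Lemma~\ref{power sum}) then produces the usual $1\leq k\leq 120$, and the algebraic chain \eqref{Rewriting}--\eqref{eqn:finalbound} is purely formal in $\beta_1,\beta_2,\eta,\delta,\varepsilon$ and concludes with the analogous upper bound on $\beta_2$, contradicting $\beta_2\geq 1-(\log\lambda_{\phi_2})^{-\varepsilon}$. I expect the principal obstacle to be the positivity step: in the Dirichlet-character setting it is automatic from the Dedekind zeta interpretation, whereas here it requires Ramakrishnan's tensor-product theorem to realize $D(s)$ as a Rankin--Selberg square $L(s,\sigma\times\widetilde\sigma)$. A secondary technical issue is the polynomial conductor bound for the degree-nine Rankin--Selberg factor, which follows from Bushnell--Henniart/Brumley-type bounds and does not introduce essential difficulty.
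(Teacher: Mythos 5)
Your proposal is correct and is essentially the argument the paper has in mind: the paper only sketches Theorem \ref{Adjoint} by saying one modifies the proof of Theorem \ref{thm:main} \`a la Hoffstein--Lockhart, and your write-up supplies exactly the right ingredients (local positivity of $-D'/D$ via the unramified Satake computation $(1+A)(1+B)\geq 0$, entirety of the Rankin--Selberg factor from $\mathrm{Sym}^2\phi_1\not\cong\mathrm{Sym}^2\phi_2$ at level one, polynomial conductor bounds, and the unchanged Tur\'an step). The only cosmetic remark is that the appeal to Ramakrishnan's $\mathrm{GL}_2\times\mathrm{GL}_2$ lift is not needed, since your direct local computation already gives the positivity without any automorphy of $\phi_1\boxtimes\phi_2$.
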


\section*{Acknowledgements}

J.T. is partially supported by the National Science Foundation (DMS-2401311) and the Simons Foundation (MP-TSM-00002484).


\bibliographystyle{abbrv}
\bibliography{SiegelZero}
\end{document}